\begin{document}
\title[]{on the c*-algebra generated by toeplitz operators and fourier multipliers on the hardy space of a locally compact group}
\dedicatory{Dedicated to Prof. Ayd{\i}n Aytuna on the occasion of
his 65th birthday}
\author[u. g\"{u}l]{u\u{g}ur g\"{u}l}

\address{u\u{g}ur g\"{u}l,  \newline
Hacettepe University, Department of Mathematics, 06800, Beytepe,
Ankara, TURKEY}
\email{\href{mailto:gulugur@gmail.com}{gulugur@gmail.com}}

\thanks{Date: 21/02/2014}

\subjclass[2000]{47B35} \keywords{C*-algebras, Toeplitz Operators,
Hardy space of a locally compact group}
\begin{abstract}
Let $G$ be a locally compact abelian Hausdorff topological group
which is non-compact and whose Pontryagin dual $\Gamma$ is
partially ordered. Let $\Gamma^{+}\subset\Gamma$ be the semigroup
of positive elements in $\Gamma$. The Hardy space $H^{2}(G)$ is
the closed subspace of $L^{2}(G)$ consisting of functions whose
Fourier transforms are supported on $\Gamma^{+}$. In this paper we
consider the C*-algebra $C^{*}(\mathcal{T}(G)\cup
F(C(\dot{\Gamma^{+}})))$ generated by Toeplitz operators with
continuous symbols on $G$ which vanish at infinity and Fourier
multipliers with symbols which are continuous on one point
compactification of $\Gamma^{+}$ on the Hilbert-Hardy space
$H^{2}(G)$. We characterize the character space of this C*-algebra
using a theorem of Power.

\end{abstract}

\maketitle
\newtheorem{theorem}{Theorem}
\newtheorem{acknowledgement}[theorem]{Acknowledgement}
\newtheorem{algorithm}[theorem]{Algorithm}
\newtheorem{axiom}[theorem]{Axiom}
\newtheorem{case}[theorem]{Case}
\newtheorem{claim}[theorem]{Claim}
\newtheorem{conclusion}[theorem]{Conclusion}
\newtheorem{condition}[theorem]{Condition}
\newtheorem{conjecture}[theorem]{Conjecture}
\newtheorem{corollary}[theorem]{Corollary}
\newtheorem{criterion}[theorem]{Criterion}
\newtheorem{definition}[theorem]{Definition}
\newtheorem{example}[theorem]{Example}
\newtheorem{exercise}[theorem]{Exercise}
\newtheorem{lemma}[theorem]{Lemma}
\newtheorem{notation}[theorem]{Notation}
\newtheorem{problem}[theorem]{Problem}
\newtheorem{proposition}[theorem]{Proposition}
\newtheorem{remark}[theorem]{Remark}
\newtheorem{solution}[theorem]{Solution}
\newtheorem{summary}[theorem]{Summary}
\newtheorem*{thma}{Main Theorem}
\newtheorem*{thmc}{Theorem C}
\newtheorem*{thmpower}{Power's Theorem}
\newcommand{\norm}[1]{\left\Vert#1\right\Vert}
\newcommand{\abs}[1]{\left\vert#1\right\vert}
\newcommand{\set}[1]{\left\{#1\right\}}
\newcommand{\Real}{\mathbb R}
\newcommand{\eps}{\varepsilon}
\newcommand{\To}{\longrightarrow}
\newcommand{\BX}{\mathbf{B}(X)}
\newcommand{\A}{\mathcal{A}}

\section*{introduction}
For a locally compact abelian Hausdorff topological group $G$
whose Pontryagin dual $\Gamma$ is partially ordered, one can
define the positive elements of $\Gamma$ as
$\Gamma^{+}=\{\gamma\in\Gamma:\gamma\geq e\}$ where $e$ is the
identity of the group $G$ and the Hardy space $H^{2}(G)$ as
$$H^{2}(G)=\{f\in
L^{2}(G):\hat{f}(\gamma)=0\quad\forall\gamma\not\in\Gamma^{+}\}$$
where $\hat{f}$ is the Fourier transform of $f$. It is not
difficult to see that $H^{2}(G)$ is a closed subspace of
$L^{2}(G)$ and since $L^{2}(G)$ is a Hilbert space there is a
unique orthogonal projection $P:L^{2}(G)\rightarrow H^{2}(G)$ onto
$H^{2}(G)$.

This definition of the Hardy space $H^{2}(G)$ is motivated by
Riesz theorem in the classical cases when $G=\mathbb{T}$ i.e when
$G$ is the unit circle, which characterizes the Hardy class
functions among $f\in L^{2}(\mathbb{T})$ as the space of functions
whose negative Fourier coefficients vanish and by the Paley-Wiener
theorem when $G=\mathbb{R}$, the real line since the group Fourier
transform is the Fourier series when $G=\mathbb{T}$ and coincides
with the Euclidean Fourier transform when $G=\mathbb{R}$.

One can extend the theory of Toeplitz operators to this setting by
defining a Toeplitz operator with symbol $\phi\in L^{\infty}(G)$
as $T_{\phi}=P M_{\phi}$ where $M_{\phi}$ is the multiplication by
$\phi$ and $P$ is the orthogonal projection of $L^{2}$ onto
$H^{2}$. Such a definition was first considered by Coburn and
Douglas in \cite{codou}. However the Toeplitz operators considered
in \cite{codou} were more general since no partial order was
assumed on the dual $\Gamma$ whereas the Hardy space was defined
as the space of functions whose Fourier transforms are supported
on a fixed sub-semigroup $\Gamma_{0}$ of $\Gamma$. The definition
of Hardy space of groups whose duals are partially ordered and
their Toeplitz operators were introduced and studied by Murphy in
\cite{murphy3} and \cite{murphy4}. However in these papers
\cite{murphy3} and \cite{murphy4}, Murphy studies the case where
$G$ is compact. In this paper we will study the case where $G$ is
not compact. One very important assumption that we will make is
that $\Gamma^{+}$ separates the points of $G$, i.e. for any
$t_{1},t_{2}\in G$ satisfying $t_{1}\neq t_{2}$ there is
$\gamma\in\Gamma^{+}$ such that $\gamma(t_{1})\neq\gamma(t_{2})$.

The Toeplitz C*-algebra of a locally compact group is defined as
$$\mathcal{T}(G)=C^{*}(\{T_{\phi}:\phi\in C_{0}(G)\}\cup\{I\})$$
where $C_{0}(G)$ is the space of continuous functions vanishing at
infinity and $I$ is the identity operator. In the study of this
Toeplitz C*-algebra, the most important notions are the commutator
ideal
$com(G)=I^{*}(\{T_{\phi}T_{\psi}-T_{\psi}T_{\phi}:\phi,\psi\in
C_{0}(G)\})$, the semi-commutator ideal
$scom(G)=I^{*}(\{T_{\phi\psi}-T_{\psi}T_{\phi}:\phi,\psi\in
C_{0}(G)\})$ and the symbol map
$\Sigma:C(\dot{G})\rightarrow\mathcal{T}(G)/com(G)$,
$\Sigma(\phi)=[T_{\phi}]$ where $\dot{G}$ is the one point
compactification of $G$ and $[T_{\phi}]$ denotes the equivalence
class of $T_{\phi}$ modulo $com(G)$. It is not difficult to see
that $com(G)\subseteq scom(G)$. We start by proving the following
important result whose proof is adapted from \cite{murphy2}:
\begin{lemma}
Let $G$ be a locally compact abelian Hausdorff topological group
whose Pontryagin dual $\Gamma$ is partially ordered and let
$\Gamma^{+}$ be the semigroup of positive elements of $\Gamma$.
Suppose that $\Gamma^{+}$ separates the points of $G$ i.e. for any
$t_{1},t_{2}\in G$ with $t_{1}\neq t_{2}$ there is
$\gamma\in\Gamma^{+}$ such that $\gamma(t_{1})\neq\gamma(t_{2})$.
Let $com(G)$ and $scom(G)$ be the commutator and the
semi-commutator ideal of the Toeplitz C*-algebra $\mathcal{T}(G)$
respectively. Then
$$com(G)=scom(G)$$
\end{lemma}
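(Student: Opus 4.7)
The inclusion $com(G)\subseteq scom(G)$ will be immediate from the algebraic identity
$$T_\phi T_\psi - T_\psi T_\phi = (T_{\phi\psi} - T_\psi T_\phi) - (T_{\phi\psi} - T_\phi T_\psi),$$
valid because $\phi\psi=\psi\phi$, which exhibits every commutator of two generators of $\mathcal{T}(G)$ as a difference of two semi-commutators and hence as an element of $scom(G)$.

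For the reverse inclusion, my plan is to show that each elementary semi-commutator belongs to $com(G)$. Setting $Q = I - P$, a direct computation gives
$$T_{\phi\psi} - T_\phi T_\psi = P M_\phi Q M_\psi P = H_{\bar\phi}^{*} H_\psi,$$
where $H_\psi := Q M_\psi P$ is the Hankel operator with symbol $\psi$. Since $T_{\phi\psi} - T_\phi T_\psi$ and $T_{\phi\psi} - T_\psi T_\phi$ differ only by the commutator $[T_\phi,T_\psi] \in com(G)$, it suffices to prove that the product of Hankels $H_{\bar\phi}^{*}H_\psi$ lies in $com(G)$ for all $\phi,\psi\in C_0(G)$.

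I would accomplish this in two steps. First, I would establish an analogue of Hartman's theorem in the present setting: $H_\psi$ is a compact operator whenever $\psi\in C_0(G)$. Since $\psi\mapsto H_\psi$ is contractive, it is enough to verify compactness on a dense subset of $C_0(G)$. A natural candidate is the image under the (inverse) Fourier transform of $L^1(\Gamma)$-functions with compact support and sufficient smoothness; such a family is dense in $C_0(G)$ (by Plancherel, using that $\Gamma^+$ separates points of $G$), and for any element $\hat{f}$ in the family one can write $H_{\hat f}$ via Plancherel as an integral operator between $L^2(\Gamma^+)$ and $L^2(\Gamma\setminus\Gamma^+)$ whose kernel is square-integrable over the relevant product domain, hence Hilbert--Schmidt. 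A norm-approximation argument using $\|H_\psi\|\le\|\psi\|_\infty$ then yields compactness for every $\psi\in C_0(G)$, and consequently $scom(G)\subseteq\mathcal{K}(H^2(G))$.

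Second, I would show $\mathcal{K}(H^2(G))\subseteq com(G)$. For this I would verify that $\mathcal{T}(G)$ acts irreducibly on $H^2(G)$: the separating property of $\Gamma^+$ gives enough continuous symbols $\phi\in C_0(G)$ to exclude any nontrivial common invariant subspace for the family $\{T_\phi\}$. Then I would exhibit one specific nonzero compact element of $com(G)$, e.g.\ a commutator $[T_\phi,T_\psi]$ which is nonzero (again invoking separation of points) and compact by the first step. The standard fact that a closed two-sided ideal of an irreducibly acting C*-subalgebra of $B(H^2(G))$ which meets $\mathcal{K}(H^2(G))$ nontrivially must contain all of $\mathcal{K}(H^2(G))$ then completes the argument, and $scom(G)\subseteq \mathcal{K}(H^2(G))\subseteq com(G)$ follows. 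The main obstacle I foresee is the explicit Plancherel/Hilbert--Schmidt estimate in the first step, which needs the partial order on $\Gamma$ in an essential way; the irreducibility and ideal-of-compacts argument in the second step are then relatively routine.
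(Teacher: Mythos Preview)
Your plan has a genuine gap: the Hartman-type step, on which everything else rests, is false at the stated level of generality. The hypotheses allow, for instance, $G=\mathbb{R}^{2}$ with $\Gamma=\mathbb{R}^{2}$ carrying the product partial order, so that $\Gamma^{+}=[0,\infty)^{2}$; one checks easily that $\Gamma^{+}$ separates points of $G$. In this case $H^{2}(G)=H^{2}(\mathbb{R})\otimes H^{2}(\mathbb{R})$ and $P=P_{1}\otimes P_{2}$. For a tensor symbol $\psi=f\otimes g$ with $f,g\in C_{0}(\mathbb{R})$ one computes $T_{\psi}=T_{f}\otimes T_{g}$ and hence
\[
T_{\psi^{2}}-T_{\psi}^{2}
=(T_{f^{2}}-T_{f}^{2})\otimes T_{g}^{2}
+T_{f}^{2}\otimes(T_{g^{2}}-T_{g}^{2})
+(T_{f^{2}}-T_{f}^{2})\otimes(T_{g^{2}}-T_{g}^{2}).
\]
The one-variable semi-commutators $T_{f^{2}}-T_{f}^{2}$ and $T_{g^{2}}-T_{g}^{2}$ are compact and (for generic $f,g$) nonzero, while $T_{f}^{2}$ and $T_{g}^{2}$ are non-compact; a short Weyl-sequence argument then shows that the sum above is \emph{not} compact. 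Thus $scom(G)\not\subseteq\mathcal{K}(H^{2}(G))$, and the chain $scom(G)\subseteq\mathcal{K}(H^{2}(G))\subseteq com(G)$ you propose cannot close. The specific failure in your sketch is the Hilbert--Schmidt estimate: on the Fourier side the kernel of $H_{\check{k}}$ is supported in $\{(\gamma,\gamma')\in(\Gamma\setminus\Gamma^{+})\times\Gamma^{+}:\gamma(\gamma')^{-1}\in\operatorname{supp}k\}$, and when the topological boundary of $\Gamma^{+}$ has infinite Haar measure (as for the quadrant in $\mathbb{R}^{2}$) this set has infinite product measure, so the kernel is not in $L^{2}$.

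The paper avoids compactness entirely. It argues algebraically: setting $B=\{\psi\in C_{0}(G):T_{\phi}T_{\psi}-T_{\phi\psi}\in com(G)\ \forall\phi\}$, one checks that $B$ is a closed self-adjoint subalgebra of $C_{0}(G)$, and then observes that every $\check{k}$ with $k\in L^{1}(\Gamma^{+})$ lies in $B$ (because multiplication by such a function preserves $H^{2}(G)$, so the associated semi-commutators vanish identically). The separation hypothesis on $\Gamma^{+}$ guarantees that these $\check{k}$ already separate points of $G$, so Stone--Weierstrass gives $B=C_{0}(G)$ and hence $scom(G)\subseteq com(G)$. This route uses the order on $\Gamma$ only through the fact that $\Gamma^{+}$ is a sub-semigroup, and never needs any compactness of Hankel-type operators.
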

It is shown in \cite{codou} and \cite{murphy3} that
$\Sigma:C(\dot{G})\rightarrow\mathcal{T}(G)/com(G)$ is an isometry
but is not a homomorphism since it may not preserve the
multiplication. However
$\Sigma:C(\dot{G})\rightarrow\mathcal{T}(G)/scom(G)$ is a
homomorphism and combining this fact with Lemma 1 above we deduce
that the symbol map
$\Sigma:C(\dot{G})\rightarrow\mathcal{T}(G)/com(G)$ is an
isometric isomorphism which means that
$$M(\mathcal{T}(G))=\dot{G}$$ where $M(A)$ is the character space
of a C*-algebra $A$.

We introduce another class of operators acting on $H^{2}(G)$ which
are called ``Fourier multipliers". These operators in the
classical case $G=\mathbb{R}$ were introduced in \cite{gul}. The
space of Fourier multipliers is defined as $$
F(C(\dot{\Gamma^{+}}))=\{D_{\theta}=\mathcal{F}^{-1}M_{\theta}\mathcal{F}\mid_{H^{2}(G)}:\theta\in
C(\dot{\Gamma^{+}})\}$$ where $\mathcal{F}:L^{2}(G)\rightarrow
L^{2}(\Gamma)$ is the Fourier transform. By Plancherel theorem it
is not difficult to see that the image $\mathcal{F}(H^{2}(G))$ of
$H^{2}(G)$ under the Fourier transform is equal to
$L^{2}(\Gamma^{+})$. Again it is not difficult to see that
$F(C(\dot{\Gamma^{+}}))$ is isometrically isomorphic to
$C(\dot{\Gamma^{+}})$. This means that
$$M(F(C(\dot{\Gamma^{+}})))=\dot{\Gamma^{+}}$$

Lastly we consider the C*-algebra generated by $\mathcal{T}(G)$
and $F(C(\dot{\Gamma^{+}}))$ which we denote by
$\Psi(C_{0}(G),C(\dot{\Gamma^{+}}))$ i.e.
$$\Psi(C_{0}(G),C(\dot{\Gamma^{+}}))=C^{*}(\mathcal{T}(G)\cup
F(C(\dot{\Gamma^{+}})))$$ Using a Theorem of Power
\cite{power},\cite{power2} which characterizes the character space
of the C*-algebra generated by two C*-algebras as a certain subset
of the cartesian product of character spaces of these two
C*-algebras, we prove following theorem:
\begin{thma}
Let $G$ be a non-compact,locally compact abelian Hausdorff
topological group whose Pontryagin dual $\Gamma$ is partially
ordered and let $\Gamma^{+}$ be the semigroup of positive elements
of $\Gamma$. Suppose that $\Gamma^{+}$ separates the points of $G$
i.e. for any $t_{1},t_{2}\in G$ with $t_{1}\neq t_{2}$ there is
$\gamma\in\Gamma^{+}$ such that $\gamma(t_{1})\neq\gamma(t_{2})$.
Let
$$\Psi(C_{0}(G),C(\dot{\Gamma^{+}}))=C^{*}(\mathcal{T}(G)\cup
F(C(\dot{\Gamma^{+}})))$$ be the C*-algebra generated by Toeplitz
operators and Fourier multipliers on $H^{2}(G)$. Then for the
character space $M(\Psi)$ of $\Psi(C_{0}(G),C(\dot{\Gamma^{+}}))$
we have $$M(\Psi)\cong
(\dot{G}\times\{\infty\})\cup(\{\infty\}\times\dot{\Gamma^{+}})$$
\end{thma}
\section{preliminaries}
In this section we fix the notation that we will use throughout
and recall some preliminary facts that will be used in the sequel.

Let $S$ be a compact Hausdorff topological space. The space of all
complex valued continuous functions on $S$ will be denoted by
$C(S)$. For any $f\in C(S)$, $\parallel f\parallel_{\infty}$ will
denote the sup-norm of $f$, i.e. $$\parallel
f\parallel_{\infty}=\sup\{\mid f(s)\mid:s\in S\}.$$ If $S$ is a
locally compact Hausdorff topological space, $C_{0}(S)$ will
denote the space of continuous functions $f$ which vanish at
infinity i.e. for any $\varepsilon>0$ there is a compact subset
$K\subset S$ such that $\mid f(x)\mid<\varepsilon$ for all
$x\not\in K$. For a Banach space $X$, $K(X)$ will denote the space
of all compact operators on $X$ and $\mathcal{B}(X)$ will denote
the space of all bounded linear operators on $X$. The real line
will be denoted by $\mathbb{R}$, the complex plane will be denoted
by $\mathbb{C}$ and the unit circle group will be denoted by
$\mathbb{T}$. The one point compactification of a locally compact
Hausdorff topological space $S$ will be denoted by $\dot{S}$. For
any subset $S\subset$ $\mathcal{B}(H)$, where $H$ is a Hilbert
space, the C*-algebra generated by $S$ will be denoted by
$C^{*}(S)$ and for any subset $S\subset A$ where $A$ is a
C*-algebra, the closed two-sided ideal generated by $S$ will be
denoted by $I^{*}(S)$.

 For any $\phi\in$ $L^{\infty}(G)$ where $G$ is a Borel space(a topological space with a regular measure on it),
$M_{\phi}$ will be the multiplication operator on $L^{2}(G)$
defined as
\begin{equation*}
M_{\phi}(f)(t)=\phi(t)f(t).
\end{equation*}
For convenience, we remind the reader of the rudiments of theory
of Banach algebras, some basic abstract harmonic analysis and
Toeplitz operators.

Let $A$ be a Banach algebra. Then its character space $M(A)$ is
defined as
\begin{equation*}
    M(A)=\{x\in A^{*}:x(ab)=x(a)x(b)\quad\forall a,b\in A\}
\end{equation*}
where $A^{*}$ is the dual space of $A$. If $A$ has identity then
$M(A)$ is a compact Hausdorff topological space with the weak*
topology. When $A$ is commutative $M(A)$ is called the maximal
ideal space of $A$. For a commutative Banach algebra $A$ the
Gelfand transform $\Gamma:A\rightarrow C(M(A))$ is defined as
\begin{equation*}
    \Gamma(a)(x)=x(a).
\end{equation*}
 If $A$ is a commutative C*-algebra with
identity, then $\Gamma$ is an isometric *-isomorphism between $A$
and $C(M(A))$. If $A$ is a C*-algebra and $I$ is a two-sided
closed ideal of $A$, then the quotient algebra $A/I$ is also a
C*-algebra (see \cite{murphy}). For a Banach algebra $A$, we
denote by $com(A)$ the closed ideal in $A$ generated by the
commutators $\{a_{1}a_{2}-a_{2}a_{1}:a_{1},a_{2}\in A\}$. It is an
algebraic fact that the quotient algebra $A/com(A)$ is a
commutative Banach algebra. The reader can find detailed
information about Banach and C*-algebras in \cite{rudin} and
\cite{murphy} related to what we have reviewed so far.

On a locally compact abelian Hausdorff topological group $G$ there
is a unique(up to multiplication by a constant) translation
invariant measure $\lambda$ on $G$ i.e. for any Borel subset
$E\subset G$ and for any $x\in G$, $$\lambda(xE)=\lambda(E)$$
where $xE=\{xy:y\in E\}$ is the translate of $E$ by $x$. This
measure is called the Haar measure of $G$. Let $L^{1}(G)$ be the
space of integrable functions with respect to this measure. Then
$L^{1}(G)$ becomes a commutative Banach algebra with
multiplication as the convolution defined as
$$(f\ast g)(t)=\int_{G}f(ts^{-1})g(s)d\lambda(s)$$
The Pontryagin dual $\Gamma$ of $G$ is defined to be the set of
all continuous homomorphisms from $G$ to the circle group
$\mathbb{T}$:
$$\Gamma=\{\gamma:G\rightarrow\mathbb{T}:\gamma(st)=\gamma(s)\gamma(t)\quad\textrm{and}\quad\gamma\quad\textrm{is
continuous}\}$$ It is a well known fact that $\Gamma$ is in one to
one correspondence with the maximal ideal space $M(L^{1}(G))$ of
$L^{1}(G)$ via the Fourier transform:
$$<\gamma,f>=\hat{f}(\gamma)=\int_{G}\overline{\gamma(t)}f(t)d\lambda(t)$$
When $\Gamma$ is topologized by the weak* topology coming from
$M(L^{1}(G))$, $\Gamma$ becomes a locally compact abelian
Hausdorff topological group with point-wise multiplication as the
group operation:
$$(\gamma_{1}\gamma_{2})(t)=\gamma_{1}(t)\gamma_{2}(t)$$
Let $\tilde{\lambda}$ be a fixed Haar measure on $\Gamma$.
Plancherel theorem asserts that the Fourier transform
$\mathcal{F}$ is an isometric isomorphism of $L^{2}(G)$ onto
$L^{2}(\Gamma)$:
$$\mathcal{F}(f)(\gamma)=\hat{f}(\gamma)=\int_{G}\overline{\gamma(t)}f(t)d\lambda(t)$$
with inverse $\mathcal{F}^{-1}$ defined as
$$\mathcal{F}^{-1}(f)(t)=\check{f}(t)=\int_{\Gamma}\gamma(t)f(\gamma)d\tilde{\lambda}(\gamma)$$
Here we note that $\tilde{\lambda}$ is normalized so that the
above formula for the inverse Fourier transform holds. For
detailed information on abstract harmonic analysis consult
\cite{rudin2}.

A partially ordered group $\Gamma$ is a group with partial order
$\geq$ on it satisfying $\gamma_{1}\geq\gamma_{2}$ implies
$\gamma\gamma_{1}\geq\gamma\gamma_{2}$ $\forall\gamma\in\Gamma$.
This definition of the ordered group was given in \cite{murphy4}.
Let $\Gamma^{+}=\{\gamma\in\Gamma:\gamma\geq e\}$ be the
semi-group of positive elements of $\Gamma$ where $e$ is the unit
of the group $\Gamma$. Let $G$ be a locally compact abelian
Hausdorff topological group and let $\Gamma$ be the Pontryagin
dual of $G$. Then the Hardy space $H^{2}(G)$ is defined as
$$H^{2}(G)=\{f\in
L^{2}(G):\hat{f}(\gamma)=0\quad\forall\gamma\not\in\Gamma^{+}\}$$
The Hardy space $H^{2}(G)$ is a closed subspace of $L^{2}(G)$ and
since $L^{2}(G)$ is a Hilbert space, there is a unique orthogonal
projection $P:L^{2}(G)\rightarrow H^{2}(G)$. For any $\phi\in
L^{\infty}(G)$ the Toeplitz operator $T_{\phi}:H^{2}(G)\rightarrow
H^{2}(G)$ is defined as
$$T_{\phi}=P M_{\phi}$$  Toeplitz operators satisfy the following algebraic properties:
\begin{itemize}
\item $T_{c\phi+\psi}=cT_{\phi}+T_{\psi}$\quad$\forall
c\in\mathbb{C}$,\quad$\forall\phi,\psi\in C(\dot{G})$
\item $T_{\phi}^{*}=T_{\bar{\phi}}$\quad$\forall\phi\in
C(\dot{G})$
\end{itemize} The proofs of these properties are the same as in the classical case where
$G=\mathbb{T}$(or $G=\mathbb{R}$) and can be found in
\cite{douglas}.

The Toeplitz C*-algebra $\mathcal{T}(G)$ is defined to be the
C*-algebra generated by continuous symbols on $G$:
$$\mathcal{T}(G)=C^{*}(\{T_{\phi}:\phi\in C_{0}(G)\}\cup\{I\})$$
where $I$ is the identity operator and $C_{0}(G)$ is the space of
continuous functions which vanish at infinity:
$$C_{0}(G)=\{f:G\rightarrow\mathbb{C}:f\quad\textrm{is continuous
and}\quad\forall\epsilon>0\quad\exists K\subset\subset G\mid
f(t)\mid<\epsilon\quad\forall t\not\in K\}$$ where
$K\subset\subset G$ denotes a compact subset of $G$. Actually one
has $$\mathcal{T}(G)=C^{*}(\{T_{\phi}:\phi\in C(\dot{G})\})$$
where $\dot{G}$ is the one-point compactification of $G$. In the
case where $G$ is compact one has $G=\dot{G}$ and the most
prototypical concrete example of this case is $G=\mathbb{T}$. This
case was analyzed by Coburn in \cite{coburn}. The famous result of
Coburn asserts that for any $T\in\mathcal{T}(\mathbb{T})$ there
are unique $K\in K(H^{2}(\mathbb{T}))$ and $\phi\in C(\mathbb{T})$
such that $T=T_{\phi}+K$. Hence the quotient algebra
$\mathcal{T}(\mathbb{T})/K(H^{2}(\mathbb{T}))$ modulo the compact
operators is isometrically isomorphic to $C(\mathbb{T})$. The two
sided closed *-ideal $com(G)$ generated by the commutators is
called the commutator ideal of $\mathcal{T}(G)$:
$$com(G)=I^{*}(\{T_{\phi}T_{\psi}-T_{\psi}T_{\phi}:\phi,\psi\in
C(\dot{G})\})$$ and the semi-commutator ideal $scom(G)$ is defined
as
$$scom(G)=I^{*}(\{T_{\phi\psi}-T_{\psi}T_{\phi}:\phi,\psi\in
C(\dot{G})\})$$ The symbol map
$\Sigma:C(\dot{G})\rightarrow\mathcal{T}(G)/com(G)$ is defined as
$$\Sigma(\phi)=[T_{\phi}]$$ where $[.]$ denotes the equivalence
class modulo $com(G)$. In \cite{codou} and \cite{murphy4} it is
shown that $\Sigma$ is an isometry. The symbol map $\Sigma$ also
preserves the * operation however is not a homomorphism i.e does
not preserve multiplication. But if $com(G)=scom(G)$ then it is an
isometric isomorphism. We will show under certain conditions that
$com(G)=scom(G)$.

We introduce another class of operators which we call the
``Fourier multipliers". This class of operators in the case
$G=\mathbb{R}$ was introduced in \cite{gul} and proved to be
useful in calculating the essential spectra of a class of
composition operators. The Fourier multiplier
$D_{\theta}:H^{2}(G)\rightarrow H^{2}(G)$ with symbol $\theta\in
C(\dot{\Gamma^{+}})$ is defined as
$$D_{\theta}(f)(t)=(\mathcal{F}^{-1}M_{\theta}\mathcal{F}(f))(t)$$
The most prototypical example of a Fourier multiplier is a
convolution operator with kernel $k\in L^{1}(G)$:
$$(T_{k}f)(t)=\int_{G}k(ts^{-1})f(s)d\lambda(s)$$ It is not
difficult to see that actually $T_{k}=D_{\hat{k}}$ where $\hat{k}$
denotes the Fourier transform of $k$. The set of all Fourier
multipliers $F(C(\dot{\Gamma^{+}}))$ defined as
$$F(C(\dot{\Gamma^{+}}))=\{D_{\theta}:\theta\in
C(\dot{\Gamma^{+}})\}$$ is a commutative C*-algebra since the map
$D:C(\dot{\Gamma^{+}})\rightarrow F(C(\dot{\Gamma^{+}}))$ defined
as $D(\theta)=D_{\theta}$ is an isometric *-isomorphism.

  Lastly we consider the C*-algebra generated by Toeplitz
  operators and Fourier multipliers. Let
  $\Psi(C_{0}(G),C(\dot{\Gamma}))$ be the C*-algebra
  $$\Psi(C_{0}(G),C(\dot{\Gamma}))=C^{*}(\mathcal{T}(G)\cup
  F(C(\dot{\Gamma^{+}})))$$  generated by
  Toeplitz operators with continuous symbols and continuous
  Fourier multipliers. The main result of this paper is a
  characterization of the character space $M(\Psi)$ of
  $\Psi(C_{0}(G),C(\dot{\Gamma}))$. We know that
  $$M(F(C(\dot{\Gamma^{+}})))\cong\dot{\Gamma^{+}},$$ under
  certain conditions we have $scom(G)=com(G)$ and this implies
  that $$M(\mathcal{T}(G))\cong\dot{G}.$$

 We will use the following theorem due to Power
\cite{power},\cite{power2} in identifying the character space of
$\Psi(C_{0}(G),C(\dot{\Gamma^{+}}))$:
\begin{thmpower}
    Let $C_{1}$, $C_{2}$ be C*-subalgebras of $B(H)$ with identity,
    where $H$ is a separable Hilbert space, such that $M(C_{i})\neq$
    $\emptyset$, where $M(C_{i})$ is the space of multiplicative linear
    functionals of $C_{i}$, $i= 1,\,2$ and let $C$ be the C*-algebra that
    they generate. Then for the commutative C*-algebra $\tilde{C}=$
    $C/com(C)$ we have $M(\tilde{C})=$ $P(C_{1},C_{2})\subset$
    $M(C_{1})\times M(C_{2})$, where $P(C_{1},C_{2})$ is defined to be
    the set of points $(x_{1},x_{2})\in$ $M(C_{1})\times M(C_{2})$
    satisfying the condition: \\
    \quad Given $0\leq a_{1} \leq 1$, $0 \leq a_{2} \leq 1$,  $a_{1}\in C_{1}$, $a_{2}\in
    C_{2}$,
\begin{equation*}
    x_{i}(a_{i})=1\quad\textrm{with}\quad
        i=1,2\quad\Rightarrow\quad\| a_{1}a_{2}\|=1.
\end{equation*}
    \label{thmpower}
\end{thmpower}
The proof of this theorem can be found in \cite{power}. Power's
theorem will give the character space $M(\Psi)$ of
$\Psi(C_{0}(G),C(\dot{\Gamma}))$ as a certain subset of the
cartesian product $\dot{G}\times\dot{\Gamma^{+}}$.

\section{the character space of $\Psi(C_{0}(G),C(\dot{\Gamma}))$ }
 In this section we will concentrate on the C*-algebra
 $\Psi(C_{0}(G),C(\dot{\Gamma}))$. But before that we will
 identify the character space $M(\mathcal{T}(G))$ of
 $\mathcal{T}(G)$ under certain conditions. The condition that we
 will pose on $G$ is that $\Gamma^{+}$ separate the points of $G$
 i.e. for any $t_{1},t_{2}\in G$ with $t_{1}\neq t_{2}$ there is
 $\gamma\in\Gamma^{+}$ such that $\gamma(t_{1})\neq\gamma(t_{2})$.
 Under this condition we show that $scom(G)=com(G)$ and this
 implies that $M(\mathcal{T}(G))\cong\dot{G}$. Hence we begin by proving the
 following lemma whose proof is adapted from the proof of Theorem 2.2 of \cite{murphy2}:
 \begin{proposition}
Let $G$ be a locally compact abelian Hausdorff topological group
whose Pontryagin dual $\Gamma$ is partially ordered and let
$\Gamma^{+}$ be the semigroup of positive elements of $\Gamma$.
Suppose that $\Gamma^{+}$ separates the points of $G$ i.e. for any
$t_{1},t_{2}\in G$ with $t_{1}\neq t_{2}$ there is
$\gamma\in\Gamma^{+}$ such that $\gamma(t_{1})\neq\gamma(t_{2})$.
Let $com(G)$ and $scom(G)$ be the commutator and the
semi-commutator ideal of the Toeplitz C*-algebra $\mathcal{T}(G)$
respectively. Then
$$com(G)=scom(G)$$
\end{proposition}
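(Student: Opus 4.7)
The inclusion $com(G)\subseteq scom(G)$ is automatic, since every commutator is a difference of two semi-commutators:
$$[T_{\phi},T_{\psi}]=(T_{\psi\phi}-T_{\psi}T_{\phi})-(T_{\phi\psi}-T_{\phi}T_{\psi}).$$
Writing $S(\phi,\psi):=T_{\phi\psi}-T_{\phi}T_{\psi}$ and passing to the commutative quotient $\mathcal{T}(G)/com(G)$, the remaining inclusion is equivalent to showing that the symbol map $\Sigma(\phi)=[T_{\phi}]$ is multiplicative, i.e.\ $S(\phi,\psi)\in com(G)$ for every $\phi,\psi\in C(\dot{G})$.

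The engine of the argument is the Plancherel observation that for $\gamma\in\Gamma^{+}$ the multiplication operator $M_{\gamma}$ leaves $H^{2}(G)$ invariant, because on the Fourier side it shifts the support by $\gamma$ and $\gamma\Gamma^{+}\subseteq\Gamma^{+}$. A dual argument using the order axiom shows $M_{\bar\gamma}$ leaves $H^{2}(G)^{\perp}$ invariant. Hence $T_{\gamma}=M_{\gamma}|_{H^{2}(G)}$ is pure multiplication and one computes the exact identities
$$T_{\phi}T_{\gamma}=T_{\phi\gamma},\qquad T_{\bar\gamma}T_{\phi}=T_{\bar\gamma\phi}\qquad(\gamma\in\Gamma^{+},\;\phi\in L^{\infty}(G)).$$
Combining these with the commutativity of $\mathcal{T}(G)/com(G)$ yields $S(\gamma,\phi),\,S(\bar\gamma,\phi)\in com(G)$ for every $\gamma\in\Gamma^{+}$ and every $\phi\in L^{\infty}(G)$.

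To lift this from individual characters to arbitrary continuous symbols, introduce
$$B:=\{\phi\in L^{\infty}(G):S(\phi,\psi)\in com(G)\text{ for all }\psi\in L^{\infty}(G)\}.$$
The algebraic identity
$$S(\phi_{1}\phi_{2},\psi)=S(\phi_{1},\phi_{2}\psi)+T_{\phi_{1}}S(\phi_{2},\psi)-S(\phi_{1},\phi_{2})T_{\psi}$$
makes $B$ closed under products; $S(\bar\psi,\bar\phi)=S(\phi,\psi)^{*}$ combined with commutativity of the quotient makes it $*$-closed; and continuity of $\phi\mapsto T_{\phi}$ plus closedness of $com(G)$ makes it norm-closed. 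Thus $B$ is a norm-closed $*$-subalgebra of $L^{\infty}(G)$, and by the preceding paragraph it contains $\Gamma^{+}\cup(\Gamma^{+})^{-1}$, hence the whole $*$-algebra $\mathcal{A}$ generated by $\Gamma^{+}$. By hypothesis $\mathcal{A}$ separates the points of $G$.

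The main obstacle is the last step: upgrading $\mathcal{A}\subseteq B$ to $C(\dot{G})\subseteq B$. In Murphy's compact-group setting this is an immediate Stone--Weierstrass argument on $C(G)=C(\dot{G})$, because the characters themselves lie in $C(\dot{G})$; here they do not vanish at infinity, so $\mathcal{A}\not\subseteq C(\dot{G})$ and Stone--Weierstrass cannot be invoked on $C(\dot{G})$ directly. The plan is to approximate $\phi\in C_{0}(G)$ locally: given $\varepsilon>0$, choose a compact $K\subseteq G$ outside of which $|\phi|<\varepsilon$, apply Stone--Weierstrass on $C(K)$ to approximate $\phi|_{K}$ uniformly by some $\beta\in\mathcal{A}$, and then glue this back with a $C_{c}(G)$-cut-off to produce a $C_{0}(G)$-approximant whose semi-commutator against any fixed $\psi$ differs from an element of $com(G)$ by an operator-norm error that tends to $0$. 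Keeping these cut-off approximants genuinely inside $B$ while controlling the error in $S(\cdot,\psi)$ is the technical heart of the adaptation of Murphy's argument; once this is done, $C_{0}(G)\subseteq B$, adding constants gives $C(\dot{G})\subseteq B$, and $scom(G)\subseteq com(G)$ follows.
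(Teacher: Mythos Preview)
Your argument tracks the paper's closely through the definition of the set $B$ and the verification that it is a closed $*$-subalgebra, but the last step---getting $C_{0}(G)\subseteq B$---is left as a sketch that does not close. The difficulty you flag is real: the characters $\gamma\in\Gamma^{+}$ are unimodular and do not lie in $C_{0}(G)$, so Stone--Weierstrass on $C(\dot{G})$ (or on $C_{0}(G)$) is unavailable with that generating set. Your proposed workaround, multiplying an approximant $\beta\in\mathcal{A}$ by a cut-off $\chi\in C_{c}(G)$, is circular: to know $\chi\beta\in B$ you would need $\chi\in B$, which is exactly the inclusion $C_{c}(G)\subseteq B$ you are trying to prove; and a bare norm estimate $\|\chi\beta-\phi\|_{\infty}\to 0$ fails because nothing controls $|\beta|$ on the transition region of $\chi$. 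As written the proof has a genuine gap at this point.

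The paper avoids this entirely by replacing raw characters with their $L^{1}$-averages. For $k\in L^{1}(\Gamma^{+})$ set $\check{k}(t)=\int_{\Gamma^{+}}k(\gamma)\gamma(t)\,d\tilde{\lambda}(\gamma)$. By Riemann--Lebesgue $\check{k}\in C_{0}(G)$, and since $\mathcal{F}(\check{k}f)=k\ast\hat{f}$ is supported in $\Gamma^{+}\Gamma^{+}\subseteq\Gamma^{+}$, multiplication by $\check{k}$ preserves $H^{2}(G)$; hence $T_{\phi}T_{\check{k}}=T_{\phi\check{k}}$ exactly, so $\check{k}\in B$. The hypothesis that $\Gamma^{+}$ separates points of $G$ then forces $\{\check{k}:k\in L^{1}(\Gamma^{+})\}$ to separate points of $G$ (and to vanish nowhere), so the locally compact Stone--Weierstrass theorem applies directly on $C_{0}(G)$ and gives $B=C_{0}(G)$. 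This is the missing idea: manufacture a separating family that is simultaneously analytic (so it lands in $B$ for free) and already in $C_{0}(G)$ (so Stone--Weierstrass applies with no cut-off gymnastics).
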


\begin{proof}
It is trivial that $com(G)\subseteq scom(G)$ hence we need to show
that $scom(G)\subseteq com(G)$:

Let $B=\{\phi\in C_{0}(G):T_{\phi}T_{\psi}-T_{\phi\psi}\in
com(G)\}$ then $B$ is a self-adjoint subalgebra of $C_{0}(G)$: Let
$\psi\in B$ then since
$$T_{\phi}T_{\bar{\psi}}-T_{\phi\bar{\psi}}=(T_{\psi}T_{\bar{\phi}}-T_{\bar{\phi}}T_{\psi})^{*}+(T_{\bar{\phi}}T_{\psi}-T_{\bar{\phi}\psi})^{*}$$
we have $(T_{\psi}T_{\bar{\phi}}-T_{\bar{\phi}}T_{\psi})^{*}\in
com(G)$, $(T_{\bar{\phi}}T_{\psi}-T_{\bar{\phi}\psi})^{*}\in
com(G)$ and hence $T_{\phi}T_{\bar{\psi}}-T_{\phi\bar{\psi}}\in
com(G)$ $\forall\phi\in C(\dot{G})$. This implies that
$\bar{\psi}\in B$. It is clear that $\psi_{1},\psi_{2}\in B$
implies that $\psi_{1}+\psi_{2}\in B$. Let us check that
$\psi_{1}\psi_{2}\in B$: we have
$$T_{\phi}T_{\psi_{1}\psi_{2}}-T_{\phi\psi_{1}\psi_{2}}=T_{\phi}(T_{\psi_{1}\psi_{2}}-T_{\psi_{1}}T_{\psi_{2}})+(T_{\phi}T_{\psi_{1}}-T_{\phi\psi_{1}})T_{\psi_{2}}
+(T_{\phi\psi_{1}}T_{\psi_{2}}-T_{\phi\psi_{1}\psi_{2}}).$$ Since
$\psi_{1}\in B$ and $com(G)$ is an ideal we have
$T_{\phi}(T_{\psi_{1}\psi_{2}}-T_{\psi_{1}}T_{\psi_{2}})\in
com(G)$, $(T_{\phi}T_{\psi_{1}}-T_{\phi\psi_{1}})T_{\psi_{2}}\in
com(G)$ and
$(T_{\phi\psi_{1}}T_{\psi_{2}}-T_{\phi\psi_{1}\psi_{2}})\in
com(G)$ which implies that
$T_{\phi}T_{\psi_{1}\psi_{2}}-T_{\phi\psi_{1}\psi_{2}}\in com(G)$
$\forall\phi\in C_{0}(G)$. So we have $\psi_{1}\psi_{2}\in B$. Now
we need to show that $B$ separates the points of $G$ to conclude
the proof since in that case $B$ is closed and by
Stone-Weierstrass theorem we will have $B=C_{0}(G)$: Now let
$A(G)=\{\psi\in C_{0}(G):\psi f\in H^{2}(G)\quad\forall f\in
H^{2}(G)\}$. Clearly $A(G)\subseteq B$, hence if we show that
$A(G)$ separates the points of $G$ we are done. For any $k\in
L^{1}(\Gamma^{+})$ consider
$$\check{k}(t)=\int_{\Gamma^{+}}k(\gamma)\gamma(t)d\tilde{\lambda}(\gamma)$$
then since for any $f\in H^{2}(G)$ we have
$$\mathcal{F}(\check{k}f)=k\ast\hat{f}$$ the Fourier transform of
$\check{k}f$ will be supported in $\Gamma^{+}$. This implies that
$\check{k}\in A(G)$. Since $\Gamma^{+}$ separates the points of
$G$, $\{\check{k}:k\in L^{1}(\Gamma^{+})\}$ also separates the
points of $G$ and hence $A(G)$ separates the points of $G$. This
implies that $B$ separates the points of $G$. This proves our
lemma.
\end{proof}
We have the following corollary of proposition 2:
\begin{corollary}
Let $G$ be a locally compact abelian Hausdorff topological group
whose Pontryagin dual $\Gamma$ is partially ordered and let
$\Gamma^{+}$ be the semigroup of positive elements of $\Gamma$.
Suppose that $\Gamma^{+}$ separates the points of $G$ i.e. for any
$t_{1},t_{2}\in G$ with $t_{1}\neq t_{2}$ there is
$\gamma\in\Gamma^{+}$ such that $\gamma(t_{1})\neq\gamma(t_{2})$.
Let $\mathcal{T}(G)$ be the Toeplitz C*-algebra with symbols in
$C(\dot{G})$ acting on $H^{2}(G)$. Then we have
$$M(\mathcal{T}(G))\cong\dot{G}$$
\end{corollary}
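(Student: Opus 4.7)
The plan is to deduce the corollary directly from Proposition 2 together with the two standard facts about the symbol map $\Sigma:C(\dot{G})\to\mathcal{T}(G)/com(G)$, $\Sigma(\phi)=[T_\phi]$ that the author has already recalled from \cite{codou} and \cite{murphy3}: namely, that $\Sigma$ is a linear $*$-preserving isometry, and that when one further quotients by $scom(G)$ the induced map becomes multiplicative. Once these pieces are in hand, the corollary is essentially a routine Gelfand-theoretic consequence.

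First I would combine the two facts. Proposition 2 gives $com(G)=scom(G)$, so the natural quotient map $\mathcal{T}(G)/com(G)\to\mathcal{T}(G)/scom(G)$ is the identity. Hence $\Sigma$, regarded with codomain $\mathcal{T}(G)/com(G)$, is simultaneously an isometric $*$-linear map and a homomorphism, i.e.\ an isometric $*$-isomorphism of $C(\dot{G})$ onto $\mathcal{T}(G)/com(G)$. In particular $\mathcal{T}(G)/com(G)$ is commutative (which is also automatic from the definition of $com(G)$), and it is $*$-isomorphic, as a unital C*-algebra, to $C(\dot{G})$.

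Next I would pass to character spaces. Since $\mathcal{T}(G)$ is unital and $com(G)$ is the closed ideal generated by the commutators, every multiplicative linear functional on $\mathcal{T}(G)$ annihilates $com(G)$, so characters of $\mathcal{T}(G)$ are in canonical homeomorphic bijection (in the weak-$*$ topology) with characters of $\mathcal{T}(G)/com(G)$. Applying the Gelfand representation to the commutative unital C*-algebra $\mathcal{T}(G)/com(G)\cong C(\dot{G})$, and using that $M(C(\dot{G}))\cong\dot{G}$ via point evaluations, I obtain the chain of homeomorphisms
\begin{equation*}
M(\mathcal{T}(G))\;\cong\;M(\mathcal{T}(G)/com(G))\;\cong\;M(C(\dot{G}))\;\cong\;\dot{G}.
\end{equation*}

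There is no real obstacle here; the only mild subtlety is being careful that the isomorphism $\Sigma$ really is a genuine $*$-isomorphism of unital C*-algebras before invoking Gelfand duality, which is why one needs Proposition 2 to upgrade the isometric $*$-linear bijection to a homomorphism. Everything else is bookkeeping. If a cleaner presentation is preferred, one can state it as: by Proposition 2 the symbol map induces an isometric $*$-isomorphism $C(\dot{G})\simeq\mathcal{T}(G)/com(G)$, whose dual homeomorphism is precisely the identification $\dot{G}\simeq M(\mathcal{T}(G))$ sending $t\in\dot{G}$ to the character $T\mapsto\widehat{[T]}(t)$ under the Gelfand transform.
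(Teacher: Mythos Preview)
Your proof is correct and follows essentially the same route as the paper: you use Proposition~2 to identify $com(G)=scom(G)$, conclude that the symbol map $\Sigma:C(\dot{G})\to\mathcal{T}(G)/com(G)$ is an isometric $*$-isomorphism, and then pass to character spaces using that characters annihilate commutators and $M(C(\dot{G}))=\dot{G}$. The paper's proof is just a terser version of exactly this chain of identifications.
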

\begin{proof}
The symbol map
$\Sigma:C(\dot{G})\rightarrow\mathcal{T}(G)/com(G)$,
$\Sigma(\phi)=[T_{\phi}]$ is an isometry that preserves the
*-operation and
$\Sigma:C(\dot{G})\rightarrow\mathcal{T}(G)/scom(G)$ is
multiplicative. Since $com(G)=scom(G)$, $\Sigma$ is an isometric
isomorphism. Since characters kill the commutators we have
$$M(\mathcal{T}(G))=M(\mathcal{T}(G)/com(G))\cong
M(C(\dot{G}))=\dot{G}$$
\end{proof}
We will need the following small observation in proving our main
theorem:
\begin{lemma}
Let $G$ be a locally compact,non-compact,abelian Hausdorff
topological group and let $K_{1},K_{2}\subset G$ be two non-empty
compact subsets of $G$. Then there is $t_{0}\in G$ such that
$K_{1}\cap(t_{0}K_{2})=\emptyset$ where $t_{0}K_{2}=\{t_{0}t:t\in
K_{2}\}$.
\end{lemma}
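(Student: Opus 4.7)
The plan is to reduce the disjointness statement to saying that a certain compact set does not exhaust $G$. Observe that $K_1 \cap (t_0 K_2) \neq \emptyset$ precisely when there exist $k_1 \in K_1$, $k_2 \in K_2$ with $k_1 = t_0 k_2$, equivalently $t_0 = k_1 k_2^{-1}$. Therefore what I need is a $t_0 \in G$ lying outside the set
\[
K_1 K_2^{-1} = \{k_1 k_2^{-1} : k_1 \in K_1,\ k_2 \in K_2\}.
\]

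Next I would verify that $K_1 K_2^{-1}$ is compact. The group operations are continuous, so the map $\mu : G \times G \to G$ sending $(x,y) \mapsto xy^{-1}$ is continuous. Since $K_1 \times K_2$ is compact in $G \times G$ (product of compact sets), its image $K_1 K_2^{-1} = \mu(K_1 \times K_2)$ is compact in $G$.

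Finally, because $G$ is assumed non-compact and Hausdorff, no compact subset can equal all of $G$; in particular the compact set $K_1 K_2^{-1}$ is a proper subset, so its complement is non-empty. Picking any $t_0 \in G \setminus K_1 K_2^{-1}$ yields a point with $t_0 \neq k_1 k_2^{-1}$ for every $k_1 \in K_1$, $k_2 \in K_2$, which by the initial reduction gives $K_1 \cap (t_0 K_2) = \emptyset$, as required. There is no substantive obstacle here: the only ingredients are continuity of the group operations, compactness of a product of compact sets, and the non-compactness hypothesis on $G$.
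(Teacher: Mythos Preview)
Your argument is correct and is in fact cleaner than the paper's. The paper proceeds by contradiction: assuming $K_1\cap(tK_2)\neq\emptyset$ for every $t$, it chooses a net $t_\alpha\to\infty$ in the one-point compactification $\dot{G}$, writes $x_\alpha=t_\alpha y_\alpha$ with $x_\alpha\in K_1$, $y_\alpha\in K_2$, passes to convergent subnets, and derives the contradiction $x_0=\lim t_\beta y_\beta=\infty\in K_1$. Your route is different: you rewrite the condition as $t_0\in K_1K_2^{-1}$, note that $K_1K_2^{-1}$ is compact as the image of $K_1\times K_2$ under the continuous map $(x,y)\mapsto xy^{-1}$, and then invoke non-compactness of $G$ to find a point outside it. This is more direct, uses only the continuity of the group operations and the definition of non-compactness, and sidesteps the slightly delicate step in the paper's argument of justifying $\lim t_\beta y_\beta=\infty$ (which amounts to checking that multiplication by elements of a compact set behaves well at $\infty$). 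Both approaches ultimately rest on the same fact---that the ``obstruction set'' of bad $t_0$'s is compact---but yours isolates this point explicitly and finishes in one line.
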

\begin{proof}
Since $G$ is non-compact and locally compact there is a one-point
compactification $\dot{G}$ of $G$. Hence there is a point at
infinity $\infty\in\dot{G}$ such that $\infty\not\in G$. Assume
that the lemma does not hold i.e. there are two non-empty compact
subsets $K_{1},K_{2}\subset G$ such that
$K_{1}\cap(tK_{2})\neq\emptyset$ for all $t\in G$. Now take a net
$\{t_{\alpha}\}_{\alpha\in\mathcal{I}}\subset G$ such that
$\lim_{\alpha\in\mathcal{I}}t_{\alpha}=\infty$. Since
$\forall\alpha\in\mathcal{I}$ we have
$K_{1}\cap(t_{\alpha}K_{2})\neq\emptyset$, there are
$x_{\alpha}\in K_{1}$ and $y_{\alpha}\in K_{2}$ such that
$x_{\alpha}=t_{\alpha}y_{\alpha}$. Since $K_{1}$ and $K_{2}$ are
compact there are $x_{0}\in K_{1}$, $y_{0}\in K_{2}$ and sub-nets
$x_{\alpha_{1}}\in K_{1}$, $y_{\alpha_{2}}\in K_{2}$ such that
$\lim x_{\alpha_{1}}=x_{0}$ and $\lim y_{\alpha_{2}}=y_{0}$. One
can further find a common sub-net index set
$\mathcal{I}_{0}\subset\mathcal{I}$ such that
$\lim_{\beta\in\mathcal{I}_{0}}x_{\beta}=x_{0}$ and
$\lim_{\beta\in\mathcal{I}_{0}}y_{\beta}=y_{0}$. Since
$x_{\beta}=t_{\beta}y_{\beta}$,
$\lim_{\beta\in\mathcal{I}_{0}}t_{\beta}=\infty$ and
multiplication is continuous this implies that
$$x_{0}=\lim_{\beta\in\mathcal{I}_{0}}x_{\beta}=\lim_{\beta\in\mathcal{I}_{0}}t_{\beta}y_{\beta}=\infty$$
but this contradicts to the fact that $x_{0}\in K_{1}$. This
contradiction proves the lemma.
\end{proof}
Now we will show the following lemma which will shorten the proof
of our main theorem. The proof of the following lemma is adapted
from \cite{schmitz}:
\begin{lemma}
Let $G$ be a locally compact abelian Hausdorff topological group
with Pontryagin dual $\Gamma$. Let $\phi\in C_{0}(G)$ and
$\theta\in C_{0}(\Gamma)$ each have compact supports. Then
$D_{\theta}M_{\phi}$ is a compact operator on $L^{2}(G)$ where
$D_{\theta}=\mathcal{F}^{-1}M_{\theta}\mathcal{F}$.
\end{lemma}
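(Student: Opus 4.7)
The plan is to represent $D_{\theta}M_{\phi}$ as an integral operator on $L^{2}(G)$ and to verify that its kernel lies in $L^{2}(G\times G)$, making it a Hilbert--Schmidt operator, which is in particular compact. Since $\theta\in C_{0}(\Gamma)$ has compact support it lies in $L^{1}(\Gamma)\cap L^{2}(\Gamma)$, so $\check{\theta}=\mathcal{F}^{-1}(\theta)$ is a bounded continuous function on $G$ that belongs to $L^{2}(G)$ by Plancherel; similarly $\phi\in C_{c}(G)\subset L^{1}(G)\cap L^{2}(G)$.

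For $f\in C_{c}(G)$, all integrands below are absolutely integrable because of the compact supports of $\theta$, $\phi$, and $f$, so Fubini's theorem applied to the definition gives
\begin{align*}
(D_{\theta}M_{\phi}f)(t)&=\int_{\Gamma}\gamma(t)\theta(\gamma)\widehat{\phi f}(\gamma)\,d\tilde{\lambda}(\gamma)\\
&=\int_{G}\phi(s)f(s)\Bigl(\int_{\Gamma}\gamma(ts^{-1})\theta(\gamma)\,d\tilde{\lambda}(\gamma)\Bigr)d\lambda(s)\\
&=\int_{G}\check{\theta}(ts^{-1})\phi(s)f(s)\,d\lambda(s),
\end{align*}
using the conventions fixed in the preliminaries and the identity $\gamma(t)\overline{\gamma(s)}=\gamma(ts^{-1})$. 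Thus on the dense subspace $C_{c}(G)$ the operator $D_{\theta}M_{\phi}$ is realised by the integral kernel $k(t,s)=\check{\theta}(ts^{-1})\phi(s)$. By translation invariance of the Haar measure on $G$ and Plancherel,
\begin{equation*}
\iint_{G\times G}|k(t,s)|^{2}\,d\lambda(t)d\lambda(s)=\|\phi\|_{2}^{2}\|\check{\theta}\|_{2}^{2}=\|\phi\|_{2}^{2}\|\theta\|_{2}^{2}<\infty,
\end{equation*}
since $\phi$ and $\theta$ are continuous with compact support. Hence the kernel operator is Hilbert--Schmidt on $L^{2}(G)$, and because it agrees on the dense set $C_{c}(G)$ with the bounded operator $D_{\theta}M_{\phi}$, the two coincide, and $D_{\theta}M_{\phi}$ is Hilbert--Schmidt, hence compact.

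The only delicate point in the plan is the Fubini interchange producing the kernel $k$. I would handle it by first restricting to $f\in C_{c}(G)$, where the dominating integral $\int_{\Gamma\times G}|\theta(\gamma)|\,|\phi(s)|\,|f(s)|\,d\lambda(s)d\tilde{\lambda}(\gamma)$ is manifestly finite thanks to the compact supports; the operator identity on all of $L^{2}(G)$ then follows by density, since both sides define bounded operators on $L^{2}(G)$.
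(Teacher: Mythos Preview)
Your proof is correct and follows essentially the same route as the paper: both represent $D_{\theta}M_{\phi}$ as an integral operator with kernel $k(t,s)=\check{\theta}(ts^{-1})\phi(s)$ and verify $k\in L^{2}(G\times G)$ to conclude the operator is Hilbert--Schmidt. Your $L^{2}$-norm computation using translation invariance yields the exact value $\|\phi\|_{2}^{2}\|\theta\|_{2}^{2}$, whereas the paper bounds it by $\|\phi\|_{\infty}^{2}\|\theta\|_{2}^{2}\lambda(\operatorname{supp}\phi)$, and you are more explicit about justifying the Fubini step via density of $C_{c}(G)$; otherwise the arguments are the same.
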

\begin{proof}
Let $K_{1}\subset G$ and $K_{2}\subset\Gamma$ be compact supports
of $\phi$ and $\theta$ respectively. Then for any $f\in L^{2}(G)$
we have
\begin{eqnarray*}
&
&(D_{\theta}M_{\phi}f)(t)=\int_{K_{2}}\gamma(t)\theta(\gamma)\left(\int_{K_{1}}\overline{\gamma(\tau)}\phi(\tau)f(\tau)d\lambda(\tau)\right)d\tilde{\lambda}(\gamma)\\
&
&=\int_{K_{1}}\left(\phi(\tau)\int_{K_{2}}\gamma(t\tau^{-1})\theta(\gamma)d\tilde{\lambda}(\gamma)\right)f(\tau)d\lambda(\tau)=\int_{K_{1}}k(t,\tau)f(\tau)d\lambda(\tau)
\end{eqnarray*}
where
$$k(t,\tau)=\phi(\tau)\int_{K_{2}}\gamma(t\tau^{-1})\theta(\gamma)d\tilde{\lambda}(\gamma)$$
Now consider
\begin{eqnarray*}
& &\int_{G}\int_{G}\mid
k(t,\tau)\mid^{2}d\lambda(t)d\lambda(\tau)=\int_{G}\int_{G}\mid\phi(\tau)\int_{K_{2}}\gamma(t\tau^{-1})\theta(\gamma)d\tilde{\lambda}(\gamma)\mid^{2}d\lambda(t)d\lambda(\tau)\\
&
&\leq\parallel\phi\parallel_{\infty}^{2}\int_{K_{1}}\int_{G}\mid\check{\theta}(t\tau^{-1})\mid^{2}d\lambda(t)d\lambda(\tau)=\parallel\phi\parallel_{\infty}^{2}\int_{K_{1}}\parallel\check{\theta}\parallel_{2}d\lambda(\tau)\\
&
&=\parallel\phi\parallel_{\infty}^{2}\int_{K_{1}}\parallel\theta\parallel_{2}d\lambda(\tau)=\parallel\phi\parallel_{\infty}^{2}\parallel\theta\parallel_{2}\lambda(K_{1})<\infty
\end{eqnarray*}
This implies that $D_{\theta}M_{\phi}$ is Hilbert-Schmidt and
hence compact.
\end{proof}
 Now we are ready to prove our main theorem as follows:
\begin{thma}
Let $G$ be a non-compact,locally compact abelian Hausdorff
topological group whose Pontryagin dual $\Gamma$ is partially
ordered and let $\Gamma^{+}$ be the semigroup of positive elements
of $\Gamma$. Suppose that $\Gamma^{+}$ separates the points of $G$
i.e. for any $t_{1},t_{2}\in G$ with $t_{1}\neq t_{2}$ there is
$\gamma\in\Gamma^{+}$ such that $\gamma(t_{1})\neq\gamma(t_{2})$.
Let
$$\Psi(C_{0}(G),C(\dot{\Gamma^{+}}))=C^{*}(\mathcal{T}(G)\cup
F(C(\dot{\Gamma^{+}})))$$ be the C*-algebra generated by Toeplitz
operators and Fourier multipliers on $H^{2}(G)$. Then for the
character space $M(\Psi)$ of $\Psi(C_{0}(G),C(\dot{\Gamma^{+}}))$
we have $$M(\Psi)\cong
(\dot{G}\times\{\infty\})\cup(\{\infty\}\times\dot{\Gamma^{+}})$$
\end{thma}
\begin{proof}
We will use Power's Theorem. In the setup of Power's theorem
$C_{1}=\mathcal{T}(G)$ and $C_{2}=F(C(\dot{\Gamma^{+}}))$. By
corollary 3 we have $M(C_{1})=\dot{G}$ and we have
$M(C_{2})=\dot{\Gamma^{+}}$. So we need to determine
$(t,\gamma)\in\dot{G}\times\dot{\Gamma^{+}}$ satisfying for
$0\leq\phi,\theta\leq 1$, $\phi(t)=\theta(\gamma)=1$ implies
$\parallel T_{\phi}D_{\theta}\parallel=1$.

Let $(t,\gamma)\in G\times\Gamma^{+}$. Let $\phi\in C(\dot{G})$,
$\theta\in C(\dot{\Gamma^{+}})$ such that $0\leq\phi,\theta\leq 1$
and $\phi(t)=\theta(\gamma)=1$. Let us also assume that $\theta$
and $\phi$ have compact supports. Let $\tilde{\theta}\in
C(\dot{\Gamma})$ such that $0\leq\tilde{\theta}\leq 1$,
$\tilde{\theta}$ has compact support and
$\tilde{\theta}\mid_{C(\dot{\Gamma^{+}})}=\theta$. Since
$$\parallel T_{\phi}D_{\theta}\parallel\leq\parallel
M_{\phi}D_{\tilde{\theta}}\parallel_{L^{2}(G)}$$ it suffices to
show that $\parallel
M_{\phi}D_{\tilde{\theta}}\parallel_{L^{2}(G)}<1$. We will also
assume that $\phi(s)<1$ $\forall s\in G-\{t\}$. Since
$(M_{\phi}D_{\tilde{\theta}})^{*}=D_{\tilde{\theta}}M_{\phi}$ and
$D_{\tilde{\theta}}M_{\phi}$ is compact by Lemma 5,
$M_{\phi}D_{\tilde{\theta}}$ is also compact. Hence
$M_{\phi}D_{\tilde{\theta}}(M_{\phi}D_{\tilde{\theta}})^{*}=M_{\phi}D_{\tilde{\theta}}^{2}M_{\phi}$
is a compact self-adjoint operator on $L^{2}(G)$ and this implies
that $\parallel
M_{\phi}D_{\tilde{\theta}}^{2}M_{\phi}\parallel=\mu$ where $\mu$
is the largest eigenvalue of
$M_{\phi}D_{\tilde{\theta}}^{2}M_{\phi}$. Let $f\in L^{2}(G)$ be
the corresponding eigenvector such that $\parallel
f\parallel_{2}=1$, then we have
$$\mu=\parallel\mu
f\parallel_{2}=\parallel(M_{\phi}D_{\tilde{\theta}}^{2}M_{\phi}f)\parallel_{2}<\parallel
D_{\tilde{\theta}}^{2}M_{\phi}f\parallel_{2}\leq 1$$ since
$\phi(s)<1$ $\forall s\in G-\{t\}$. This implies that $\parallel
D_{\tilde{\theta}}M_{\phi}\parallel^{2}=\parallel
M_{\phi}D_{\tilde{\theta}}^{2}M_{\phi}\parallel <1$. This means
that $(t,\gamma)\not\in M(\Psi)$ $\forall(t,\gamma)\in
G\times\Gamma^{+}$. So if $(t,\gamma)\in M(\Psi)$ then either
$t=\infty$ or $\gamma=\infty$.

Now let $t\in\dot{G}$ and $\gamma=\infty$. Let $\phi\in
C(\dot{G})$ and $\theta\in C(\dot{\Gamma^{+}})$ such that
$0\leq\phi,\theta\leq 1$ and $\phi(t)=\theta(\infty)=1$. Observe
that $P=D_{\chi_{\Gamma^{+}}}$ where $\chi_{\Gamma^{+}}$ is the
characteristic function of $\Gamma^{+}$. So we have
$D_{\theta}T_{\phi}=D_{\theta}D_{\chi_{\Gamma^{+}}}M_{\phi}=D_{\chi_{\Gamma^{+}}}D_{\theta}M_{\phi}=D_{\theta}M_{\phi}$.
Since $\mathcal{F}$ is unitary we have $$\parallel
D_{\theta}M_{\phi}\parallel_{H^{2}(G)}=\parallel\mathcal{F}D_{\theta}M_{\phi}\mathcal{F}^{-1}\parallel_{L^{2}(\Gamma^{+})}=\parallel
M_{\theta}\mathcal{F}M_{\phi}\mathcal{F}^{-1}\parallel_{L^{2}(\Gamma^{+})}$$
Since $\theta(\infty)=1$ we have $\forall\epsilon>0$,
$\exists\gamma_{0}\in\Gamma^{+}$ such that
$1-\epsilon\leq\theta(\gamma)\leq 1$
$\forall\gamma\geq\gamma_{0}$. Consider the operator
$S_{\gamma_{0}}:L^{2}(\Gamma^{+})\rightarrow L^{2}(\Gamma^{+})$
defined as $(S_{\gamma_{0}}f)(\gamma)=f(\gamma_{0}^{-1}\gamma)$,
then $S_{\gamma_{0}}$ is an isometry. Observe that
\begin{eqnarray*}
& &(\mathcal{F}^{-1}S_{\gamma_{0}}f)(t)=(\check{S_{\gamma_{0}}f})(t)=\int_{\Gamma^{+}}\gamma(t)f(\gamma_{0}^{-1}\gamma)d\tilde{\lambda}(\gamma)\\
& &=
\int_{\Gamma^{+}}\gamma_{0}(t)u(t)f(u)d\tilde{\lambda}(u)=\gamma_{0}(t)\check{f}(t)=(M_{\gamma_{0}}\check{f})(t)
\end{eqnarray*}
Hence we have
$S_{\gamma_{0}}=\mathcal{F}M_{\gamma_{0}}\mathcal{F}^{-1}$ which
implies that
$$S_{\gamma_{0}}(\mathcal{F}M_{\phi}\mathcal{F}^{-1})=(\mathcal{F}M_{\phi}\mathcal{F}^{-1})S_{\gamma_{0}}.$$
Now let $f\in L^{2}(\Gamma^{+})$ such that
$\parallel(\mathcal{F}M_{\phi}\mathcal{F}^{-1})f\parallel_{2}>1-\epsilon$
and $\parallel f\parallel_{2}=1$ then for
$g=\mathcal{F}M_{\phi}\mathcal{F}^{-1}f$ we have $$\parallel
M_{\theta}S_{\gamma_{0}}g\parallel_{2}\geq(1-\epsilon)^{2}$$ since
$S_{\gamma_{0}}g$ is supported on
$\{\gamma:\gamma\geq\gamma_{0}\}$, $\theta(\gamma)\geq 1-\epsilon$
$\forall\gamma\geq\gamma_{0}$ and $\parallel
S_{\gamma_{0}}g\parallel_{2}\geq 1-\epsilon$. Since
$S_{\gamma_{0}}g=(\mathcal{F}M_{\phi}\mathcal{F}^{-1})S_{\gamma_{0}}f$
we have
$$\parallel(M_{\theta}\mathcal{F}M_{\gamma_{0}}\mathcal{F}^{-1})(S_{\gamma_{0}}f)\parallel_{2}\geq(1-\epsilon)^{2}.$$
Since $S_{\gamma_{0}}$ is an isometry we have $\parallel
S_{\gamma_{0}}f\parallel_{2}=1$ and this implies that $$\parallel
M_{\theta}\mathcal{F}M_{\phi}\mathcal{F}^{-1}\parallel\geq(1-\epsilon)^{2}$$
$\forall\epsilon>0$. Therefore we have $\parallel
M_{\theta}\mathcal{F}M_{\phi}\mathcal{F}^{-1}\parallel=\parallel
D_{\theta}T_{\phi}\parallel=1$. Hence $(t,\infty)\in M(\Psi)$
$\forall t\in\dot{G}$.

Now let $\gamma\in\dot{\Gamma^{+}}$ and $t=\infty$. Let $\phi\in
C(\dot{G})$ and $\theta\in C(\dot{\Gamma^{+}})$ such that
$0\leq\phi,\theta\leq 1$ and $\phi(\infty)=\theta(\gamma)=1$.
Since $\phi(\infty)=1$,for any $\epsilon>0$ there is a compact
subset $K_{1}\subset G$ such that $1-\epsilon\leq\phi(t)\leq 1$
$\forall t\not\in K_{1}$. Let
$\tilde{\theta}=\chi_{\Gamma^{+}}\theta$. Then we have
$$D_{\theta}T_{\phi}=D_{\theta}D_{\chi_{\Gamma^{+}}}M_{\phi}=D_{\chi_{\Gamma^{+}}\theta}M_{\phi}=D_{\tilde{\theta}}M_{\phi}.$$
Let $\epsilon>0$ be given. Let $g\in H^{2}(G)$ so that $\parallel
g\parallel_{2}=1$ and $\parallel
D_{\tilde{\theta}}g\parallel_{2}\geq 1-\epsilon$. Let
$K_{2}\subset G$ be a compact subset of $G$ so that
$$(\int_{K_{2}}\mid g(t)\mid^{2}d\lambda(t))^{\frac{1}{2}}\geq
1-\epsilon.$$ By Lemma 4 we have $t_{0}\in G$ such that
$K_{1}\cap(t_{0}K_{2})=\emptyset$. Let
$(S_{t_{0}}g)(t)=g(tt_{0}^{-1})$ then $(\int_{t_{0}K_{2}}\mid
S_{t_{0}}g(t)\mid^{2}d\lambda(t))^{\frac{1}{2}}=(\int_{K_{2}}\mid
g(t)\mid^{2}d\lambda(t))^{\frac{1}{2}}\geq 1-\epsilon.$ and this
implies that $$\parallel
S_{t_{0}}g-M_{\phi}S_{t_{0}}g\parallel_{2}\leq 2\epsilon.$$ We
observe that
$S_{t_{0}}=\mathcal{F}^{-1}M_{\hat{t_{0}}}\mathcal{F}$ where
$\hat{t_{0}}:\Gamma\rightarrow\mathbb{C}$ defined as
$\hat{t_{0}}(\gamma)=\gamma(t_{0})$. This implies that $S_{t_{0}}$
is unitary and we have
$D_{\tilde{\theta}}S_{t_{0}}=S_{t_{0}}D_{\tilde{\theta}}$. Since
$\parallel D_{\tilde{\theta}}\parallel=1$ we have $$\parallel
D_{\tilde{\theta}}S_{t_{0}}g-D_{\tilde{\theta}}M_{\phi}S_{t_{0}}g\parallel_{2}\leq
2\epsilon.$$ Since $S_{t_{0}}$ is unitary for $f=S_{t_{0}}g$ we
have $\parallel f\parallel_{2}=1$ and
$D_{\tilde{\theta}}S_{t_{0}}=S_{t_{0}}D_{\tilde{\theta}}$ together
with $\parallel D_{\tilde{\theta}}g\parallel_{2}\geq 1-\epsilon$
implies that $$\parallel
D_{\tilde{\theta}}M_{\phi}f\parallel_{2}\geq 1-3\epsilon.$$ Since
$\epsilon>0$ is arbitrary we have $\parallel
D_{\tilde{\theta}}M_{\phi}\parallel=\parallel
D_{\theta}T_{\phi}\parallel=1$. Therefore we have
$(\infty,\gamma)\in M(\Psi)$, $\forall\gamma\in\dot{\Gamma^{+}}$.
Our theorem is thus proven.
\end{proof}
\section{Acknowledgements}
The author wishes to express his sincere thanks to Prof. R{\i}za
Ert\"{u}rk of Hacettepe University for useful discussions on Lemma
4.
\bibliographystyle{amsplain}
\addtocontents{toc}{\protect\contentsline {part}{}{}}
\addcontentsline{toc}{chapter}{BIBLIOGRAPHY}

\end{document}